\newtheorem{theorem}{Theorem}[section]
\newtheorem{corollary}[theorem]{Corollary}
\title{On the maximal mean curvature of a smooth surface}
\author[V. Ferone, C. Nitsch, C. Trombetti]
       {Vincenzo Ferone, Carlo Nitsch, Cristina Trombetti}
       \address[V.~Ferone]{Universit\`a degli Studi di Napoli Federico II, Italy.}
\email{ferone@unina.it}
\address[C.~Nitsch]{Universit\`a degli Studi di Napoli Federico II, Italy.}
\email{c.nitsch@unina.it}
\address[C.~Trombetti]{Universit\`a degli Studi di Napoli Federico II, Italy.}
\email{cristina@unina.it}
\keywords{Mean curvature, optimal bound.} 
\subjclass[2010]{53A05, 35P15, 53A10}
\begin{document}
\maketitle
%\section{}
%\subsection{}

\begin{abstract}
Given a smooth simply connected planar domain, the area is bounded away from zero in terms of the maximal curvature alone. We show that in higher dimensions this is not true, and for a given maximal mean curvature we provide smooth embeddings of the ball with arbitrary small volume.
\end{abstract}

\section{Introduction}
According to a classical result \cite{BZ,PI}, for any smooth simple planar curve $\gamma$, if the curvature $\kappa$ is bounded from above by some positive constant $M$, then the curve $\gamma$ encloses a bounded simply connected domain $\Omega$ which contains a disk of radius $M^{-1}$. In particular, see \cite{P},  if $\Omega^*$ is a disk having same measure as $\Omega$, the following inequality holds true 
\begin{equation}\label{eq_prima}
\|\kappa\|_{L^\infty(\partial\Omega)} \ge \|\kappa\|_{L^\infty(\partial\Omega^*)}
\end{equation}
or equivalently
\[
\lVert\kappa\rVert_{L^\infty(\partial\Omega)}^2 Area(\Omega)\ge \pi,
\]
equality holding in both cases if and only if $\Omega$ is a disk.

Very recently such inequalities have been generalized to other $L^p$ norm of the curvature. More precisely for $p=2$ (see \cite{BH,FKN,G}) and for $p\ge 1$ (see \cite{FKN2}) it holds
\[
\|\kappa\|_{L^p(\partial\Omega)} \ge \|\kappa\|_{L^p(\partial\Omega^*)}
\]
or equivalently
\[
\lVert\kappa\rVert_{L^p(\partial\Omega)}^2 Area(\Omega)\ge 2^{2/p}\pi^{(p+1)/p},
\]
equality holding again in both cases if and only if $\Omega$ is a disk.

In this short note we consider the 3-dimensional case. We replace simply connected planar domain by sets diffeomorphic to balls in $\mathbb{R}^3$, and the planar curvature $\kappa$ by the mean curvature $H$ of the boundary. If by $S(\cdot)$ %$\rm diam(\cdot)$ 
and $V(\cdot)$ we denote  surface area and volume respectively, our main result then reads as follows.

\begin{theorem}\label{main}
For any $\epsilon>0$ there exists $\Omega_\epsilon\subset \mathbb{R}^3$, diffeomorphic to the unit ball, with smooth $C^{1,1}$ boundary, such that %\[\lVert H\rVert^3_{L^{\infty}(\partial \Omega_\epsilon)} Vol(\Omega_\epsilon) \le \epsilon.\]
\[
\|H\|_{L^\infty(\partial \Omega_\epsilon)}\le 1, \quad |S(\Omega_\epsilon)-8\pi| \le\epsilon,\quad \mbox{and} \quad V(\Omega_\epsilon) \le \epsilon.\]
\end{theorem}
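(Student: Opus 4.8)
The plan is to build $\Omega_\epsilon$, for a small parameter $a>0$, as a rotationally symmetric solid obtained from a ball $B_{R_1}$ of radius $R_1$ slightly larger than $1$ by hollowing out a \emph{concentric} spherical cavity of radius $R_2\in[1,R_1)$ and connecting this cavity to the exterior through an extremely thin catenoidal channel. Three facts drive this: the catenoid is a minimal surface, so the channel contributes $H\equiv 0$; the two spherical parts have radii $\ge 1$, so on them $H=1/R_1<1$ and $H=-1/R_2$ with $1/R_2\le 1$, whence $|H|\le 1$; and as $a\downarrow 0$ one can force $R_1\to 1$, $R_2\to 1$, so the outer sphere has area $\to 4\pi$, the cavity sphere has area $\to 4\pi$, the channel's surface area and occupied volume shrink to $0$, and the shell between radii $R_2$ and $R_1$ becomes thin. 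Hence $S(\Omega_\epsilon)\to 8\pi$ while $V(\Omega_\epsilon)\to 0$. (Merely joining two unit balls by a thin neck would give $S\to 8\pi$ but $V\to\tfrac{8}{3}\pi$; it is precisely the hollow cavity that collapses the volume.)

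The gluing relies on an elementary computation: the meridian $\rho=a\cosh\big((z-z_0)/a\big)$ of the catenoid of waist radius $a$ has curvature $a/\rho^2$ at its point of radius $\rho$, so this curvature equals $1/R$ exactly when $\rho=\sqrt{aR}$; a check of slopes then shows that, suitably positioned, this catenoid is internally tangent to the sphere of radius $R$ along the circle $\rho=\sqrt{aR}$, with matching meridian curvature there. I would therefore remove from $\partial B_{R_1}$ and from the sphere of radius $R_2$ the polar caps of base radii $\sqrt{aR_1}$ and $\sqrt{aR_2}$, and splice in a truncated catenoid of waist $a$ joining the two freed circles; forcing a single such catenoid to reach both radii imposes $R_1-R_2\sim a\log(1/a)$, so fixing $R_2=1$ gives $R_1=1+(1+o(1))\,a\log(1/a)\downarrow 1$. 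Revolving the resulting meridian — which climbs $\partial B_{R_1}$ from its south pole to near its north pole, follows the catenoid arc, and then descends the sphere of radius $R_2$ from near its north pole to its south pole, meeting the axis perpendicularly at its two endpoints — produces an embedded $C^{1,1}$ topological sphere $\Sigma_a$. The region $\Omega_a$ it bounds is the shell $\{R_2\le|x|\le R_1\}$ with a thin column drilled out near the north pole; equivalently, its meridian region is a topological disk meeting the axis in a segment, so $\Omega_a$ is diffeomorphic to the unit ball by the Schoenflies theorem.

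It then remains to estimate the three quantities for $\Omega_a$. By the above, $\|H\|_{L^\infty(\Sigma_a)}\le 1$, the grafting radii $\sqrt{aR_i}$ being exactly what makes the meridian $C^{1,1}$ and prevents $H$ from overshooting across the two junction circles. For the area, each removed polar cap has area $O(a)$ and the inserted catenoid — sitting at radius $\le\sqrt{aR_1}$ with length $O(\sqrt a)$ — also has lateral area $O(a)$, so $S(\Omega_a)=4\pi R_1^2+4\pi R_2^2+o(1)\to 8\pi$. For the volume, $V(\Omega_a)=\tfrac43\pi(R_1^3-R_2^3)$ minus the volume of the drilled column; with $R_2=1$ the first term is $\sim 4\pi a\log(1/a)$ and the column contributes $O(a^2)$, so $V(\Omega_a)\to 0$. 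Picking $a$ small enough that all three estimates fall within the prescribed tolerance at once, set $\Omega_\epsilon=\Omega_a$.

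The genuinely delicate step is the grafting: one must check that the truncated catenoid and the two spherical pieces really assemble into a single \emph{embedded} $C^{1,1}$ surface — with consistent tangent planes and outward orientation, not merely matching meridian slopes and curvatures — and that $|H|\le 1$ survives through (or, for the $L^\infty$ bound, across) the two junction circles, while the remaining catenoid integrals, though routine, must be carried out carefully enough to certify the stated $a\to 0$ asymptotics.
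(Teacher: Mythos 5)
Your overall strategy --- a thin shell of revolution whose two nearly coincident spherical sheets carry the area $8\pi$ while the enclosed volume collapses, joined by a small neck on which the mean curvature stays controlled --- is the right idea and is in fact the strategy of the paper. But the gluing you propose does not work, and the failure is exactly at the step you flag as delicate. Your slope computation is correct: the catenoid of waist $a$ meets the sphere of radius $R$ tangentially along the circle $\rho=\sqrt{aR}$. However the contact is \emph{external}, not internal: the catenoid meridian $\rho=a\cosh((z-z_0)/a)$ is strictly convex as a graph of $\rho$ over $z$, while the sphere's meridian $\rho=\sqrt{R^2-z^2}$ is strictly concave, so at any first-order contact their difference has a strict minimum and the catenoid lies at strictly larger $\rho$ on both sides of the contact circle, i.e.\ \emph{outside} the ball near its north pole. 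This kills the outer junction. Writing $z_1^*=\sqrt{R_1^2-aR_1}$ for the height of $C_1=\{\rho=\sqrt{aR_1}\}$, your meridian must climb $\partial B_{R_1}$ up to $C_1$ and then descend along the catenoid toward the inner sphere, so its $z$-coordinate has a local maximum at $C_1$; but the common tangent there is proportional to $(-z_1^*,\sqrt{aR_1})\neq(\pm 1,0)$ in the $(\rho,z)$-plane, so the unit tangent must reverse sign at $C_1$: the spherical arc and the catenoidal arc both emanate from $C_1$ into $\{z<z_1^*\}$, mutually tangent with the catenoid outside the sphere, and the meridian has a cusp along $C_1$ rather than being $C^{1}$. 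Equivalently: tangency near the north pole forces each contact circle to lie \emph{below} the catenoid's waist, so $z_i^*=z_0-a\,\mathrm{arccosh}\sqrt{R_i/a}$ for $i=1,2$, giving $z_1^*<z_2^*$, which is impossible for concentric spheres with $R_1>R_2$. A single catenoid cannot make the hairpin turn in $z$ that the channel requires; its waist only turns the meridian around in $\rho$.

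The paper's construction is built around precisely this obstruction: every junction between consecutive arcs of its meridian occurs where the tangent is perpendicular to the axis of revolution, so the direction of travel along the axis can reverse without breaking $C^{1}$, and the tight turn near the axis joining the two nearly coincident sheets is performed not by a catenoid but by a \emph{nodary} arc, whose surface of revolution (a nodoid) has constant mean curvature $h$: the large meridian curvature of the turn is compensated by the parallel curvature, which is the feature a minimal neck cannot supply while also reversing direction along the axis. (The top of the shell is closed by two flat disks, again meeting their neighbours horizontally.) To rescue your version you would need to replace the catenoid by a connector whose meridian enters and leaves the two spherical caps with tangent perpendicular to the axis and turns around in between while keeping $|H|\le 1$; the nodoid is the natural candidate, at which point you have essentially rederived the paper's construction. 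Your curvature bounds on the individual pieces, the area and volume asymptotics, and the Schoenflies argument are all fine modulo this, but the gluing is the theorem.
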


If $\Omega^*$ denotes the ball having same volume as $\Omega$, a first consequence is
\begin{corollary}\label{corollario}
In the class of $C^{1,1}$ subsets of $\mathbb{R}^3$ diffeomorphic to balls for all $2<p\le\infty$ we have ù
\begin{equation}\label{eq_inf}
\inf\left\{\left\|H\right\|_{L^p(\partial\Omega)}: V(\Omega)=1\right\}=0.
\end{equation}
In particular, for all $2<p\le\infty$, there exists a $C^{1,1}$ set $\Omega\subset\mathbb{R}^3$, diffeomorphic to the ball $\Omega^*$, such that 
\[
\|H\|_{L^p(\partial\Omega)} < \|H\|_{L^p(\partial\Omega^*)}.
\]
\end{corollary}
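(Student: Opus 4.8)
\emph{Proof proposal.} The plan is to obtain everything from Theorem~\ref{main} by a single dilation. Recall that under the map $x\mapsto\lambda x$ ($\lambda>0$) volume scales like $\lambda^3$, surface area like $\lambda^2$, and the mean curvature of the boundary scales pointwise like $\lambda^{-1}$; moreover dilations preserve $C^{1,1}$ regularity and the diffeomorphism type. So fix $\epsilon\in(0,1)$, let $\Omega_\epsilon$ be the domain given by Theorem~\ref{main}, set $\lambda_\epsilon:=V(\Omega_\epsilon)^{-1/3}$ (well defined and finite, since $\Omega_\epsilon$ has positive volume), and put $\widetilde\Omega_\epsilon:=\lambda_\epsilon\,\Omega_\epsilon$. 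Then $V(\widetilde\Omega_\epsilon)=1$, while
\[
\|H\|_{L^\infty(\partial\widetilde\Omega_\epsilon)}=\lambda_\epsilon^{-1}\|H\|_{L^\infty(\partial\Omega_\epsilon)}\le V(\Omega_\epsilon)^{1/3}\le\epsilon^{1/3},
\qquad
S(\widetilde\Omega_\epsilon)=\lambda_\epsilon^{2}\,S(\Omega_\epsilon)=V(\Omega_\epsilon)^{-2/3}S(\Omega_\epsilon)\le\epsilon^{-2/3}\,(8\pi+\epsilon).
\]

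Next I would estimate the $L^p$ norm of $H$ on $\partial\widetilde\Omega_\epsilon$. For $2<p<\infty$, bounding the integrand by its supremum,
\[
\|H\|_{L^p(\partial\widetilde\Omega_\epsilon)}\le\|H\|_{L^\infty(\partial\widetilde\Omega_\epsilon)}\,S(\widetilde\Omega_\epsilon)^{1/p}
\le\epsilon^{1/3}\bigl(\epsilon^{-2/3}(8\pi+\epsilon)\bigr)^{1/p}
=\epsilon^{\frac13-\frac{2}{3p}}\,(8\pi+\epsilon)^{1/p}.
\]
The exponent $\tfrac13-\tfrac{2}{3p}$ is strictly positive precisely because $p>2$, so the right-hand side tends to $0$ as $\epsilon\to0^+$; for $p=\infty$ the bound $\|H\|_{L^\infty(\partial\widetilde\Omega_\epsilon)}\le\epsilon^{1/3}$ does the same. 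Since $L^p$ norms are nonnegative and each $\widetilde\Omega_\epsilon$ is an admissible competitor of unit volume, this gives \eqref{eq_inf}.

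For the last assertion, observe that the unit-volume ball $\Omega^*$ has constant nonzero mean curvature and positive area, so $\|H\|_{L^p(\partial\Omega^*)}$ is a fixed positive constant. Because the infimum in \eqref{eq_inf} is $0$, choosing $\epsilon$ small enough yields $\|H\|_{L^p(\partial\widetilde\Omega_\epsilon)}<\|H\|_{L^p(\partial\Omega^*)}$, and $\widetilde\Omega_\epsilon$ is diffeomorphic to $\Omega^*$ since both are diffeomorphic to the unit ball. There is no genuine obstacle here: the only point worth isolating is that the scaling exponent $\tfrac13-\tfrac{2}{3p}$ changes sign at $p=2$, which is exactly why the planar-type comparison persists for $p\le2$ (at $p=2$ it is the scale-invariant Willmore inequality) but must fail for every $p>2$.
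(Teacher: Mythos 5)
Your overall strategy is exactly the paper's: rescale $\Omega_\epsilon$ to unit volume and bound the $L^p$ norm by $\|H\|_{L^\infty}\,S^{1/p}$, using $p>2$ to make the net power of the volume positive; the final estimate $\epsilon^{\frac13-\frac{2}{3p}}(8\pi+\epsilon)^{1/p}$ and the deduction of the strict inequality against the ball are correct.

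There is, however, one intermediate step that does not hold as written: the surface-area bound
\[
S(\widetilde\Omega_\epsilon)=V(\Omega_\epsilon)^{-2/3}S(\Omega_\epsilon)\le\epsilon^{-2/3}(8\pi+\epsilon).
\]
Theorem~\ref{main} only gives $V(\Omega_\epsilon)\le\epsilon$, hence $V(\Omega_\epsilon)^{-2/3}\ge\epsilon^{-2/3}$: the inequality you invoke goes the wrong way, and if $V(\Omega_\epsilon)$ happens to be much smaller than $\epsilon$ the rescaled surface area can exceed $\epsilon^{-2/3}(8\pi+\epsilon)$ by an arbitrary factor. The repair is what the paper does implicitly: do not bound the two factors separately, but keep $V(\Omega_\epsilon)$ symbolic and bound the product,
\[
\|H\|_{L^\infty(\partial\widetilde\Omega_\epsilon)}\,S(\widetilde\Omega_\epsilon)^{1/p}
= V(\Omega_\epsilon)^{\frac13-\frac{2}{3p}}\,\|H\|_{L^\infty(\partial\Omega_\epsilon)}\,S(\Omega_\epsilon)^{1/p}
\le \epsilon^{\frac13-\frac{2}{3p}}\,(8\pi+\epsilon)^{1/p},
\]
where now the exponent $\frac13-\frac{2}{3p}$ is positive (this is precisely where $p>2$ enters), so $V(\Omega_\epsilon)\le\epsilon$ may legitimately be applied to the single combined power. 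With this correction your argument coincides with the paper's proof, and your closing remark that the exponent changes sign at $p=2$ is an accurate explanation of why the phenomenon is confined to $p>2$.
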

Notice that $\|H\|_{L^2}$ is a very special case since it corresponds to the Willmore energy (invariant under dilation) which is indeed minimal on balls \cite{W}. The case $p=2$ is also a threshold case since $\|H\|_{L^p}$, for $p<2$, scales under dilation as a positive power of the volume. Under volume constraint $\|H\|_{L^p}$ is in fact bounded away from zero for all $1\le p<2$, even if optimal lower bounds are still unknown, see \cite{HDMT,T}.

%Therefore none of the inequalities valid in $\mathbb{R}^2$ can be generalized to $\mathbb{R}^3$ in this very natural way.

Our interest in this kind of inequalities is also due to a question arisen in \cite{KP,PP} in relation to estimates for Laplacian eigenvalue with Robin boundary conditions. 

For any given $\alpha>0$, consider the eigenvalue problem
\begin{equation}\label{eq_robin}
\left\{\begin{array}{l}
-\Delta u=\lambda u\quad \mbox{in } \Omega\\\\
\displaystyle\frac{\partial u}{\partial \nu}=\alpha u \qquad \mbox{on } \partial\Omega
\end{array}
\right.
\end{equation}
By $\lambda(\Omega,\alpha)$ we denote the greatest (negative) $\lambda$ such that \eqref{eq_robin} admits a nontrivial solution, namely:
\[
\lambda(\Omega,\alpha)=\max \left\{ \alpha\displaystyle\int_{\partial\Omega} v^2 -\displaystyle \int_{\Omega}|\nabla v|^2: v\in H^1(\Omega), \displaystyle\int_{\Omega}v^2=1 \right\}
\]
%\frac{}{\displaystyle\int_{\Omega}u^2}

It has been conjectured for long time \cite{B} that balls achieve the greatest eigenvalue among sets of given measure. Indeed they are local maximizers in any dimension (see \cite{FNT}) and global maximizers in $2$ dimensions for $\alpha$ small enough (see \cite{FK}). However in \cite{FK} the authors also show that large values of $\alpha$ provides the annulus as counterexample to the conjecture.

In \cite{KP,PP} this was clarified showing that whenever $\Omega\subset\mathbb{R}^n$ is $C^{1,1}$ then % and $H_max$ denotes the essential sup of the mean curvature, then
\[
\lambda(\Omega,\alpha)=-\alpha^2 -\alpha(n-1)\sup_{\partial\Omega}H+o(\alpha)\qquad \mbox{as $\alpha\to\infty$}
\]

In fact, since any annulus $\mathcal{A}$ having same volume as a ball $B$ also has a smaller maximal curvature, we have $\lambda(\mathcal{A},\alpha)>\lambda(B,\alpha)$ as soon as $\alpha$ is large enough. Thereafter in \cite{PP} the authors were interested in minimizing the maximal curvature in classes of domains of given volume subject to some kind of additional topological constraints. %Thy show that balls achieve the minimal maximal mean curvature among sets of given volume. 
In view of \eqref{eq_prima}, in two dimension balls achieve the minimal maximal mean curvature whenever we restrict to simply connected sets. In dimension greater than $2$ they were able as well to prove that starshapedness is enough to get the same result. Whence they left open the following problem.
\vskip .1cm
{\bf Question.} Let $\Omega\in \mathbb{R}^n$ be a bounded smooth domain with a connected boundary and let $\Omega^*$ be a ball with same volume. Do we have $\displaystyle \sup_{\partial\Omega}H\ge \sup_{\partial\Omega^*}H$?
\vskip .1cm
Corollary \ref{corollario} provides a negative answer.

%In particular this result shows that in 3 dimension there exists a smooth set $\Omega$ such that $\displaystyle \max_{\partial\Omega}H< \max_{\partial\Omega^*}H$, when by $\Omega^*$ we denote the ball having same volume as $\Omega$. 

%Very recently the interest in such kind of inequalities was renewed

%In this short note we prove that the same is not true in higher dimension, in particular we prove the following.

%\begin{corollary}
%\end{corollary}

%\begin{figure}
%\animategraphics[loop,autoplay,viewport=200 100 600 350, scale=.5]{1}{3d_}{0}{9}
%\end{figure}

\section{Proofs}

\begin{proof}[Proof of Theorem \ref{main}]
The proof relies on an explicit construction. For all $h,\beta>0$ we design a $C^{1,1}$ arc of curve $\Gamma_{h,\beta}$ by joining together five pieces of arcs $\gamma_1...\gamma_5$ as follows (see Figure \ref{fig_2d}).
The arc $\gamma_1$ is the arc of curve whose parametric representation for $t\in [0,\pi]$ is: 

\begin{equation}
\gamma_1:\left\{
\begin{array}{ll}
x(t) = \displaystyle\frac{1}{2h}\left (\sqrt{\beta + \sin^2 t}  -\sin t  \right) 
\cr 
y(t)= \displaystyle\frac{1}{2h}\left ( \cos t-1 + \displaystyle\int_0^t \frac{\sin^2 s}{\sqrt{\beta + \sin^2 s}} ds \right) 
\end{array}\right.
\end{equation}
In Figure \ref{fig_2d} the arc $\gamma_1$ is represented in magenta.\\
\begin{figure}[htbp]
\begin{center}
\def\svgwidth{3.in}
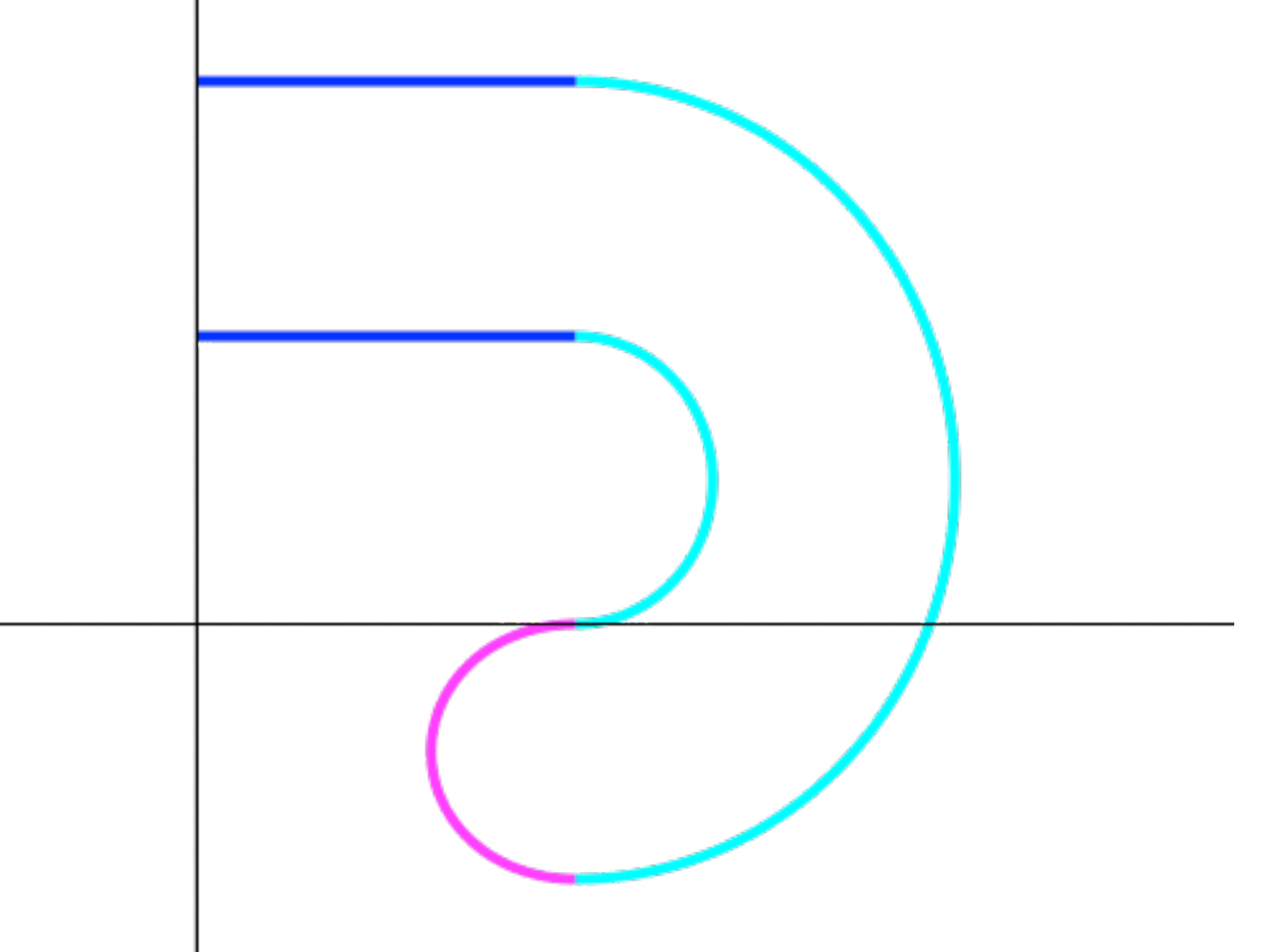\caption{The curve $\Gamma_{h,\beta}$ with $h=1$ and $\beta=4$.}
\label{fig_2d}
\end{center}
\end{figure}
With $\gamma_2$ and $\gamma_3$ we denote two concentric semicircles with parametric representations for $t\in[-\pi/2,\pi/2]$ given by:
\begin{equation}
\gamma_2:\left\{
\begin{array}{ll}
x(t) = \displaystyle\frac{1}{h}\cos t + \frac{\sqrt {\beta}}{2h}\\\\
y(t)= \displaystyle\frac{1}{h}\sin t + R(h,\beta)
\end{array}\right.
\end{equation}
\begin{equation}
\gamma_3:\left\{
\begin{array}{ll}
x(t) = \displaystyle R(h,\beta)\cos t+\frac{\sqrt {\beta}}{2h}\\\\
y(t)= \displaystyle R(h,\beta)\sin t+R(h,\beta)\end{array}\right.
\end{equation}
Here $\displaystyle R(h,\beta)=\frac{1}{2h}\int_0^\pi \frac{\sin^2 s}{\sqrt{\beta + \sin^2 s}}ds<\frac1h$.
In Figure \ref{fig_2d} the arcs $\gamma_2$ and $\gamma_3$ are represented in cyan.

The remaining arcs, $\gamma_4$ and $\gamma_5$, are segments parallel to the $x$-axis, namely for $t\in[0,1]$
\begin{equation}
\gamma_4:\left\{
\begin{array}{ll}
x(t) = \displaystyle t\frac{\sqrt{\beta}}{2h}\\
\cr 
y(t)= \displaystyle R(h,\beta)+\frac1h % \displaystyle\frac{1}{2h}\left( 2+\int_0^\pi \frac{\sin^2 s}{\sqrt{\beta + \sin^2 s}}ds \right)
\end{array}\right.
\end{equation}
\begin{equation}
\gamma_5:\left\{
\begin{array}{ll}
x(t) =  \displaystyle t\frac{\sqrt{\beta}}{2h}\\
\cr 
y(t)= \displaystyle 2R(h,\beta)
\end{array}\right.
\end{equation}

In Figure \ref{fig_2d} the arcs $\gamma_4$ and $\gamma_5$ are represented in blue.

We can consider now the surface of revolution $\Sigma_{h,\beta}$ obtained rotating of an angle $2\pi$ the curve $\Gamma_{h,\beta}$ around the $y$-axis. Since $\Gamma_{h,\beta}$ is simple and lies in the half plane $x\ge0$, the surface $\Sigma_{h,\beta}$ is $C^{1,1}$, without self intersection, compact, and it is the boundary of a bounded connected set $\Omega_{h,\beta}$. From now on we choose as normal to $\Sigma_{h,\beta}$ the unit outer normal of $\Omega_{h,\beta}$. One half of $\Sigma_{h,\beta}$ is represented in Figure \ref{fig_3d}.
\begin{figure}[htbp]
\begin{center}
\animategraphics[loop,autoplay,viewport=200 100 600 350, scale=.5]{1}{3d_}{0}{9}
\includegraphics[width=3.in]{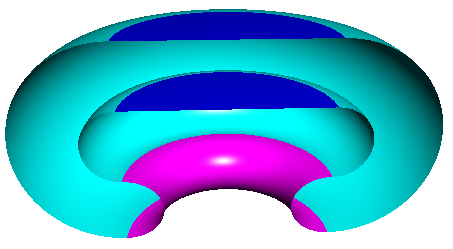}\caption{The surface of revolution obtained by rotating $\Gamma_{h,\beta}$ in Fig. \ref{fig_2d} around the $y$ axis. Here $h=1$ and $\beta=4$.
}
\label{fig_3d}
\end{center}
\end{figure}
 
Let us compute the mean curvature of $\Sigma_{h,\beta}$. Since $\Gamma$ is $C^{1,1}$ then also $\Sigma$ is $C^{1,1}$ with bounded mean curvature $H$ defined $\mathcal{H}^2$ almost everywhere. More specifically, taking into account the orientation choosen above:
\begin{itemize}
\item The arc $\gamma_1$ is called nodary and it generates a well known surface called nodoid and has the characteristic that the mean curvature $H$ is constant and equal to $h$ (see \cite{K}).
\item The arc $\gamma_4$ and $\gamma_5$ generate two flat disks and therefore have vanishing mean curvature $H\equiv 0$.
\item The arc $\gamma_2$ is half circle of radius $1/h$ and generates a portion of torus with mean curvature $0<H\le h\left(1-\frac{\sqrt{\beta}}{4+2\sqrt{\beta}}\right)$.
\item The arc $\gamma_3$ is half circle of radius $R(h,\beta)$ and generates a portion of torus with mean curvature
$0>H\ge-\frac{1}{R} \left(      1-\frac{ \sqrt{\beta} }{4hR+2\sqrt{\beta}}      \right)$ %$-1/2(1/R+1/(R+\frac{\sqrt{\beta}}{2h}$
\end{itemize}
 
For fixed $h>0$ we vary $\beta$ between $0$ and $1$. The surface $\Sigma_{h,\beta}$ is uniformly bounded with respect to $0<\beta<1$. Let us consider the limit as $\beta\to 0$ and observe that 
\begin{itemize}
\item[(i)] $\lim_{\beta\downarrow 0} R(h,\beta)=\frac1h,$
\item [(ii)] the radii of the two circular arcs $\gamma_2$ and $\gamma_3$ asymptotically coincide
\item[(iii)] $\gamma_1$ asymptotically shrinks to the origin
\item[(iv)] $\gamma_4$ and $\gamma_5$ shrink to the point $(0,\frac2h)$.
\end{itemize}
Hence we have
\[
\lim_{\beta\downarrow 0} V\left({\Omega_{h,\beta}}\right)=0,
\]
and
\[
\lim_{\beta\downarrow 0} S\left({\Omega_{h,\beta}}\right)=\frac{8\pi}{h^2},
\]
Moreover we have 
\[
-\frac1{R(h,\beta)} \le H\le h, \quad\mbox{for all } \beta>0
\]
and $H$ is uniformly bounded from above and below as long as $0<\beta<1$. 

Once we observe that
\[
\lim_{\beta\downarrow 0} \left\|H\right\|_{L^\infty(\partial\Omega_{h,\beta})}=h,
\]
we can choose $\sqrt{\frac{8\pi}{\epsilon+8\pi}}<h<1$ and $\beta$ small enough to complete the proof.\end{proof}

\begin{proof}[Proof of Corollary \ref{corollario}]
For all $\epsilon>0$ let $\Omega_\epsilon$ be the set defined as in Theorem \ref{main}. We consider then $\displaystyle\widetilde\Omega_\epsilon=\frac{\Omega_\epsilon}{V(\Omega_\epsilon)^{\frac13}}$ rescaled so that the volume is equal to $1$.
The we have
\[
\left\|H\right\|^p_{L^p(\partial\widetilde\Omega_\epsilon)}\le \left\|H\right\|^p_{L^\infty(\partial\widetilde\Omega_\epsilon)} S(\partial\widetilde\Omega_\epsilon)\le (8\pi+\epsilon) \epsilon^{\frac{p-2}{3}},
\]
which implies \eqref{eq_inf}.

Moreover for every $2<p\le\infty$ we deduce
\[
\left\|H\right\|^p_{L^p(\widetilde\Omega^*_\epsilon)}=4\pi \left(\frac{4\pi}{3}\right)^\frac{p-2}{3}\ge 4\pi,
\]
which for $\epsilon$ small enough gives
\[
\left\|H\right\|^p_{L^p(\widetilde\Omega^*_\epsilon)}<\left\|H\right\|^p_{L^p(\widetilde\Omega_\epsilon)}.
\]

\end{proof}

%\bibliography{ela}{}
%\bibliographystyle{plain}
\end{document}